\theoremstyle{plain}
   \newtheorem{theorem}{Theorem}[section]
   \newtheorem{lemma}[theorem]{Lemma}
   \newtheorem{corollary}[theorem]{Corollary}
   \newtheorem{conjecture}[theorem]{Conjecture}
\theoremstyle{definition}
   \newtheorem{example}{Example}
\theoremstyle{remark}
   \newtheorem{remark}[theorem]{Remark}
\author[P.~Br\"and\'en]{Petter Br\"and\'en}
\address{Department of Mathematics, Royal Institute of Technology, SE-100 44 Stockholm,
Sweden}
\email{pbranden@kth.se}
\keywords{hyperbolic polynomials, hyperbolicity cones, spectrahedral cones, elementary symmetric polynomials, spanning trees, matrix-tree theorem}
\subjclass[2000]{90C22, 52A41, 52B99, 05C31, 05C05}
\thanks{PB is a Royal Swedish Academy of Sciences Research Fellow
  supported by a grant from the Knut and Alice Wallenberg
  Foundation. The research is also supported by the G\"oran Gustafsson Foundation.}
\def\kkk{\kern.2ex\mbox{\raise.5ex\hbox{{\rule{.35em}{.12ex}}}}\kern.2ex}
\numberwithin{equation}{section}
\newcommand{\xx}{\mathbf{x}}
\newcommand{\vv}{\mathbf{v}}
\newcommand{\yy}{\mathbf{y}}
\newcommand{\zz}{\mathbf{z}}
\newcommand{\ee}{\mathbf{e}}
\newcommand{\one}{\mathbf{1}}
\newcommand{\PP}{\mathcal{P}}
\newcommand{\ZZ}{\mathbb{Z}}
\newcommand{\RR}{\mathbb{R}}
\newcommand{\CC}{\mathbb{C}}
\def\newop#1{\expandafter\def\csname #1\endcsname{\mathop{\rm
#1}\nolimits}}
\begin{document}

\title[Elementary representations]{Hyperbolicity cones of elementary symmetric polynomials are spectrahedral}

\maketitle

\begin{abstract}
We prove that the hyperbolicity cones of elementary symmetric polynomials are spectrahedral, i.e., they are slices of the cone of positive semidefinite matrices. The proof uses the matrix--tree theorem, an idea already present in 
Choe \emph{et al.}
\keywords{hyperbolic polynomials \and hyperbolicity cones \and spectrahedral cones \and elementary symmetric polynomials \and matrix-tree theorem}
\end{abstract}

\section{Introduction and main results}
A homogenous polynomial $h(\xx) \in \RR[x_1,\ldots, x_n]$ is \emph{hyperbolic} with respect to a vector $\ee \in \RR^n$ if $h(\ee) \neq 0$ and for all $\xx \in \RR^n$, the univariate polynomial $t \mapsto h(\xx +t\ee)$ has only real zeros. The \emph{hyperbolicity cone}, $\Lambda_{+}(h,\ee)$,  is the closure of the connected component of $\{ \xx \in \RR^n : h(\xx) \neq 0\}$ which contains $\ee$. Hyperbolicity cones are convex, and if $\ee'$ is in the interior of $\Lambda_{+}(h,\ee)$,  then $h$ is hyperbolic with respect to $\ee'$ and $\Lambda_{+}(h,\ee)=  \Lambda_{+}(h,\ee')$, see e.g. \cite{Ga,Gul,Ren}. The notion of hyperbolic polynomials originates from PDE--theory and the work of Petrovsky and G\aa rding. However, during the last fifteen years there has been increasing interest in hyperbolic polynomials from unexpected areas such as combinatorics and convex optimization \cite{COSW,Gul,Ren}. Optimization over hyperbolicity cones was first considered by G\"uler \cite{Gul} and a  rich theory  for hyperbolic programs has been developed \cite{Gul,Ren,Ren2} which extends many features of semidefinite programming. 

An important open question regarding hyperbolic programming concerns the generality of hyperbolicity cones. 
The most fundamental example of a hyperbolic polynomial is the determinant $h(\xx) = \det(X)$, where $X=(x_{ij})_{i,j=1}^m$ is the symmetric $m \times m$ matrix of $m(m+1)/2$ variables and $\ee = I$ is the identity matrix. Hence the cone of positive semidefinite $m \times m$ matrices is a hyperbolicity cone, and it follows that so  are \emph{spectrahedral cones}, i.e., cones of the form 
\begin{equation}\label{slice}
\left\{ \xx \in \RR^n : \sum_{i=1}^n  x_i A_i \mbox{ is positive semidefinite}\right\}, 
\end{equation}
where $A_i$, $1\leq i \leq n$, are symmetric $m \times m$ matrices such that there exists a vector $(y_1,\ldots, y_n) \in \RR^n$ such that $\sum_{i=1}^n  y_i A_i$ is positive definite. It has been speculated whether the converse is true \cite{HV,Ren}. 
\begin{conjecture}[Generalized Lax conjecture]\label{lax}
All hyperbolicity cones are spectrahedral i.e., of the form \eqref{slice}. 
\end{conjecture}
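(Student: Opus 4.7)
The plan is to tackle Conjecture~\ref{lax} by reducing, through structural manipulations of the hyperbolic polynomial $h$, to cases where a matrix-tree-style combinatorial identity produces the spectrahedral representation, in the spirit of this paper's treatment of elementary symmetric polynomials.

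The first step is a reduction to a structurally simpler class. By Nuij's approximation theorem, the hyperbolic polynomials with simple roots along any line through $\ee$ are dense, and since spectrahedrality behaves well under appropriate limits and intersections with half-spaces, one may narrow attention to this subclass. Next, by a polarization construction I would try to replace $h$ by a multiaffine stable polynomial $H$ in more variables such that $\Lambda_{+}(h,\ee)$ arises as the intersection of $\Lambda_{+}(H, \tilde{\ee})$ with a linear subspace; since intersecting a spectrahedron with an affine subspace produces a spectrahedron, it would suffice to treat the multiaffine case. For multiaffine stable polynomials, the support is governed by matroid structure (Choe--Oxley--Sokal--Wagner), giving combinatorial handles that are absent in the general case.

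With that reduction in hand, I would search for a linear matrix pencil $L(\xx)=\sum_{i=1}^{n} x_i A_i$ with symmetric $A_i$ such that $\{\xx : L(\xx)\succeq 0\} = \Lambda_{+}(h,\ee)$. The prototype is the weighted graph Laplacian used in this paper, whose principal minors expand via the matrix-tree theorem into the elementary symmetric polynomials in the edge weights. I would look for an analogous positive semidefinite ``Laplacian'' attached to the matroid $M$ supporting $h$ --- for instance, obtained from its basis-exchange graph, a simplicial-homology boundary map, or a random-walk kernel on $M$ --- whose characteristic polynomial (or an appropriate minor sum) recovers $h$, possibly multiplied by an auxiliary factor that does not vanish in the interior of $\Lambda_{+}(h,\ee)$.

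The principal obstacle is fundamental and, in my view, the reason Conjecture~\ref{lax} is open. The original Lax conjecture in its strict form $\det L(\xx)=h(\xx)$ with $L$ of size $\deg h$ is known to fail for $n\geq 4$, so any valid $L$ must be strictly larger than $\deg h$ and carry ``slack'' structure not dictated by $h$ alone. No universal source for this slack is known: each existing positive result (Helton--Vinnik for $n=3$, the present matrix-tree construction for elementary symmetric $h$, isolated results for certain smooth cones) extracts it from a very special feature of the polynomial at hand. Realistically, the plan above should therefore yield further positive families --- for example, cones arising as principal-minor generating functions of matroid Laplacians --- rather than a proof of the full Generalized Lax Conjecture, whose resolution appears to require a genuinely new combinatorial or algebro-geometric principle that I am not able to propose here.
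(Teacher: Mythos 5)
There is nothing in the paper to compare your proposal against: the statement you were given is Conjecture~\ref{lax}, which the paper does not prove and which remains open. The paper only establishes the special case of Theorem~\ref{eldet} (hyperbolicity cones of elementary symmetric polynomials, and hence derivative cones of polyhedral cones), via the matrix--tree theorem applied to the explicitly constructed graphs $G_{n,k}$, together with the factorization of Lemma~\ref{Hkk} and the containment $\Lambda_+(e_{k+1})\subseteq\Lambda_+(\partial^S e_k)$ from Lemma~\ref{partial}. You correctly recognize this status and candidly say at the end that your plan will not resolve the conjecture; on that central point you and the paper agree, and no further comparison is possible.

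As a purported route to the full conjecture, though, two specific steps in your sketch would fail or are unsupported. First, the reduction via Nuij-type approximation presumes that spectrahedrality passes to limits; this is not known, since the matrix sizes in the putative pencils for the approximating polynomials need not be uniformly bounded, so the limiting cone need not inherit a representation. Second, the plan to attach a Laplacian-like pencil to the support matroid of a multiaffine stable polynomial and recover $h$ by a matrix--tree or characteristic-polynomial identity collides with the obstruction in the paper's own reference \cite{Br1}: there are multiaffine hyperbolic polynomials (e.g.\ the V\'amos basis generating polynomial) supported on non-representable matroids, for which no such determinantal identity for $h$ itself can exist; only the weaker form \eqref{fac} with an auxiliary factor $q$ remains possible, and identifying a universal source for that factor is exactly the open problem. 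Note also the paper's remark that Conjecture~\ref{lax} is \emph{equivalent} to every hyperbolicity cone being a slice of the hyperbolicity cone of a spanning tree polynomial, so aiming at matrix--tree-type representations in general is a restatement of the conjecture rather than an attack on it; it only becomes an attack when, as in the paper, the special structure of the polynomial (here, the recursion \eqref{engine} for $q_k$ and the series-parallel operations (A)--(C)) lets one exhibit the graph and control the extraneous factors explicitly.
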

The evidence in favor of Conjecture \ref{lax} are not overwhelming: 
\begin{enumerate}
\item It is true for hyperbolic polynomials in three variables \cite{HV,LPR},
\item It is true for quadratic polynomials \cite{NT}.
\end{enumerate}
Stronger conjectures that imply Conjecture \ref{lax} were recently disproved in \cite{Br1}, see also \cite{NT}. 

In this note we are concerned with the hyperbolicity cones of the \emph{elementary symmetric polynomials}: 
$$
e_k(x_1,\ldots, x_n)= \! \! \! \!  \sum_{1\leq i_1< \cdots <i_k \leq n} \! \! \! \! \!  x_{i_1}\cdots x_{i_k}. 
$$
They are hyperbolic with respect to $\one=(1,\ldots,1)^T$, and their hyperbolicity cones contain the positive orthant. Zinchenko \cite{Zi} studied the hyperbolicity cones of elementary symmetric polynomials and proved that they are \emph{spectrahedral shadows}, i.e., projections of spectrahedral cones. Sanyal \cite{Sanyal} proved that the hyperbolicity cone of $e_{n-1}(x_1,\ldots, x_n)$ is spectrahedral and conjectured that all hyperbolicity cones of elementary symmetric polynomials are spectrahedral, although it is subsumed by Conjecture \ref{lax}. We will prove this conjecture. 
\begin{theorem}\label{eldet}
Hyperbolicity cones of elementary symmetric polynomials are spectrahedral. 
\end{theorem}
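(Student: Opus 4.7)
My plan is to construct, for every $1\leq k\leq n$, a symmetric pencil $M(\xx)=\sum_{i=1}^{n}x_i L_i$ with each $L_i$ positive semidefinite and $M(\one)\succ 0$, such that
\[
\Lambda_+(e_k,\one) \;=\; \{\xx\in\RR^n : M(\xx)\succeq 0\}.
\]
Following the idea of Choe \emph{et al.}, the natural source of such pencils is the matrix-tree theorem: take a connected (multi)graph $G$ whose edges are colored by indices from $\{1,\ldots,n\}$, with an edge of color $i$ assigned weight $x_i$. Its weighted reduced Laplacian $\tilde L_G(\xx)$ decomposes as a sum $\sum_i x_i L_i$ of rank-one PSD contributions coming from individual edges, and the matrix-tree theorem gives
\[
\det \tilde L_G(\xx) \;=\; T_G(\xx) \;:=\; \sum_{T\text{ spanning tree of }G}\prod_{e\in T} w_e,
\]
the weighted spanning-tree polynomial of $G$.

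The strategy is to choose $G=G_{n,k}$ so that the spanning-tree polynomial factors as $T_{G_{n,k}}(\xx)=e_k(\xx)\cdot q_{n,k}(\xx)$, where $q_{n,k}$ is hyperbolic with respect to $\one$ and $\Lambda_+(q_{n,k},\one)\supseteq \Lambda_+(e_k,\one)$. Given such a factorization, the hyperbolicity cone of a product equals the intersection of the cones of the factors, so
\[
\Lambda_+(T_{G_{n,k}},\one) \;=\; \Lambda_+(e_k,\one)\cap\Lambda_+(q_{n,k},\one) \;=\; \Lambda_+(e_k,\one),
\]
and the PSD locus of $\tilde L_{G_{n,k}}(\xx)$ is exactly the desired cone.

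For the extremal values the graph is explicit and $q$ is trivial: $k=n$ uses $n$ disjoint edges, $k=1$ uses $n$ parallel edges between two vertices, and $k=n-1$ uses the cycle $C_n$, whose spanning trees (obtained by deleting one edge) yield $T_{C_n}=\sum_i\prod_{j\ne i}x_j=e_{n-1}$, recovering Sanyal's representation. For $1<k<n-1$ the uniform matroid $U_{k,n}$ is not graphic, so no graph achieves $T_G=e_k$ on the nose; a non-trivial factor $q_{n,k}$ is unavoidable.

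The main obstacle is therefore the construction of $G_{n,k}$ and the verification of the factorization for intermediate $k$. The natural candidate for $q_{n,k}$ is a product of lower-degree elementary symmetric polynomials, since $\Lambda_+(e_j,\one)\supseteq\Lambda_+(e_k,\one)$ whenever $j\leq k$: indeed, the directional derivative of $e_k$ along $\one$ is proportional to $e_{k-1}$, and differentiating a hyperbolic polynomial in a hyperbolic direction enlarges its hyperbolicity cone. I would look for $G_{n,k}$ either as a recursively defined multigraph --- for instance, built by thickening the cycle $C_n$ with bundles of parallel edges of prescribed colors --- or as a bipartite-type construction indexed by pairs from $\{1,\ldots,k\}\times\{1,\ldots,n-k+1\}$, with $T_G$ computed via the matrix-tree theorem and matched term-by-term with $e_k$ times a product of $e_j$'s. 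Once this combinatorial identity is in hand, checking that $q_{n,k}$ is hyperbolic with the required cone reduces to standard facts about products of elementary symmetric polynomials, completing the proof that $\Lambda_+(e_k,\one)$ is spectrahedral.
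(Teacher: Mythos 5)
Your high-level strategy is exactly the one the paper follows: use the matrix--tree theorem to realize a multiple of $e_{k}$ as $\det\bigl(\sum_i x_i B_i\bigr)$ with $B_i$ positive semidefinite, and arrange that the cofactor's hyperbolicity cone contains $\Lambda_+(e_k,\one)$, so that the PSD locus of the pencil is precisely $\Lambda_+(e_k,\one)$. But as written the proposal stops short of a proof at precisely the point where all the work lies: for $1<k<n-1$ you only say you ``would look for'' a suitable graph, either by thickening a cycle or via a bipartite-type construction, and you never produce the graph, the edge weights, or the factorization identity. That identity is the heart of the matter. In the paper it is obtained by a recursive series--parallel construction (graphs $G_{n,k}$ whose edges carry nonnegative \emph{linear forms} in $x_1,\dots,x_n$, not single variables --- a flexibility your colored-edge setup would also need), and the resulting spanning-tree polynomial is computed in closed form: up to a constant it equals $e_{k+1}(\xx)\prod_{|S|\le k-1}\bigl(\partial^S e_k(\xx)\bigr)^{|S|!\,(n-|S|-1)}$. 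Without an explicit construction and a verified identity of this kind, nothing has been proved.

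Moreover, your anticipated cofactor is not the one that actually arises, and this matters for the cone containment. You guess the extra factor is a product of lower-degree elementary symmetric polynomials in \emph{all} $n$ variables, whose cones contain $\Lambda_+(e_k,\one)$ by repeated differentiation in the direction $\one$ --- an interior direction, where G\aa rding's theorem applies directly. The cofactors that the construction actually produces are $\partial^S e_k(\xx)=e_{k-|S|}([n]\setminus S)$, elementary symmetric polynomials in \emph{subsets} of the variables, i.e.\ derivatives of $e_k$ in coordinate directions $\delta_j$. For $k\ge 2$ these directions lie on the \emph{boundary} of $\Lambda_+(e_k,\one)$ (since $e_k(\delta_j)=0$), so the standard interior-direction statement does not suffice; the paper has to prove a separate limiting argument (closedness of the class of hyperbolic polynomials with cone containing a fixed open set, via Hurwitz, and then a boundary version of the derivative-cone lemma) to conclude $\Lambda_+(e_k,\one)\subseteq\Lambda_+(\partial^S e_k,\one)$. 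This step is absent from your proposal, and your ``standard facts about products of elementary symmetric polynomials'' would not cover it. (A small additional slip: for $k=n$ a graph of $n$ disjoint edges is disconnected and has vanishing spanning-tree polynomial; take instead a path with $n$ edges, or simply the diagonal pencil.)
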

Note that the hyperbolicity cone, with respect to $\ee=(e_1,\ldots, e_n)$, of $h(\xx)$ is spectrahedral if there is a pencil $\sum_{i=1}^n  x_i A_i$ of symmetric matrices (such that $\sum_{i=1}^n  e_i A_i$ is positive definite) and a homogeneous polynomial $q(\xx)$ such that 
\begin{equation}\label{fac}
q(\xx)h(\xx)= \det\left(\sum_{i=1}^n  x_i A_i \right),
\end{equation}
and $\Lambda_+(q,\ee)  \supseteq \Lambda_+(h,\ee)$. 
Our idea for proving Theorem \ref{eldet} was to use the matrix--tree theorem (Theorem \ref{matrix-tree} below) to construct a graph for which the spanning tree polynomial is a multiple of the elementary symmetric polynomial in question. In the process we became aware of that the same idea was already present in \cite[Section 9.1]{COSW} where it was  observed that the elementary symmetric polynomials are factors of determinantal polynomials. However, to prove Theorem \ref{eldet} we need to know the other factors, and that $\Lambda_+(q,\ee)  \supseteq \Lambda_+(h,\ee)$ holds in \eqref{fac}. 

Recall that a  cone is \emph{polyhedral} if  it is the intersection of a finite number of half-spaces, i.e., if it is the hyperbolicity cone of a polynomial of the form $h(\xx)= \ell_1(\xx)\cdots \ell_d(\xx)$ where $\ell_j(\xx)$ is a linear form for $1 \leq j \leq d$. If $h$ is hyperbolic with respect to $\ee=(e_1,\ldots, e_n)^T$, then 
$$
D_\ee h(\xx) = \sum_{i=1}^ne_i \frac {\partial}{\partial x_i} h(\xx)
$$
is hyperbolic with respect to $\ee$ and $\Lambda_+(D_\ee h,\ee)  \supseteq \Lambda_+(h,\ee)$, see e.g. \cite{Ga,Ren}. Of course polyhedral cones are spectrahedral and it is natural to ask if the derivative cones 
$\Lambda_+(D_\ee^k h,\ee)$ are also spectrahedral for $1 \leq k \leq d-1$. For $k=1$  this was answered to the affirmative by Sanyal \cite{Sanyal}. Using Theorem \ref{eldet} we settle the remaining cases. 
\begin{corollary}
The derivative cones of polyhedral cones are spectrahedral, i.e., if 
$h(\xx)= \ell_1(\xx)\cdots \ell_d(\xx)$ and $h(\ee) \neq 0$, then 
 $\Lambda_+(D_\ee^k h,\ee)$ is spectrahedral for each $1 \leq k \leq d-1$. 
\end{corollary}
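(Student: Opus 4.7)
The plan is to reduce the corollary to Theorem~\ref{eldet} by recognizing that, after a linear change of variables, each iterated directional derivative $D_\ee^k h$ is proportional to an elementary symmetric polynomial. First, since $h(\ee) = \prod_{j=1}^d \ell_j(\ee) \ne 0$, every $\ell_j(\ee)$ is nonzero, so after replacing $\ell_j$ by $\ell_j/\ell_j(\ee)$---which multiplies $h$ by a nonzero constant and therefore does not affect any hyperbolicity cone---I may assume $\ell_j(\ee) = 1$ for $1 \le j \le d$.

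Next I would compute
\[
h(\xx + s\ee) \;=\; \prod_{j=1}^d \bigl(\ell_j(\xx) + s\bigr) \;=\; \sum_{k=0}^{d} e_{d-k}\bigl(\ell_1(\xx), \ldots, \ell_d(\xx)\bigr)\, s^k,
\]
and then differentiate $k$ times at $s=0$ to obtain
\[
D_\ee^k h(\xx) \;=\; k!\cdot e_{d-k}\bigl(\ell_1(\xx), \ldots, \ell_d(\xx)\bigr).
\]
Setting $L : \RR^n \to \RR^d$, $L(\xx) := (\ell_1(\xx), \ldots, \ell_d(\xx))$, one has $L(\ee) = \one$ and $D_\ee^k h = k!\,(e_{d-k}\circ L)$. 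For $1 \le k \le d-1$ the factor $e_{d-k}$ is a nontrivial elementary symmetric polynomial in $d$ variables to which Theorem~\ref{eldet} applies.

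The third step is to identify the hyperbolicity cone. Using the standard fact that $\xx \in \Lambda_+(p,\ee)$ iff all roots of $t \mapsto p(\xx + t\ee)$ are $\le 0$, together with $p(L\xx + t\,L\ee) = p(L\xx + t\one)$, I would conclude
\[
\Lambda_+(D_\ee^k h,\ee) \;=\; L^{-1}\bigl(\Lambda_+(e_{d-k},\one)\bigr).
\]
By Theorem~\ref{eldet}, $\Lambda_+(e_{d-k},\one) = \{\yy\in\RR^d : \sum_{j=1}^d y_j B_j \succeq 0\}$ for symmetric matrices $B_j$ with $\sum_j B_j \succ 0$. Expanding $\ell_j(\xx) = \sum_i c_{ji} x_i$ and setting $A_i := \sum_j c_{ji} B_j$, the preimage becomes $\{\xx \in \RR^n : \sum_i x_i A_i \succeq 0\}$. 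At $\xx = \ee$ the pencil evaluates to $\sum_j \ell_j(\ee) B_j = \sum_j B_j \succ 0$, so this is a bona fide spectrahedral representation.

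The only substantive ingredient beyond routine bookkeeping is Theorem~\ref{eldet}; the main potential pitfall is the cone identity $\Lambda_+(e_{d-k}\circ L,\ee) = L^{-1}(\Lambda_+(e_{d-k},\one))$, but since $L$ is linear and sends $\ee$ to $\one$ this follows directly from the univariate root-sign characterization and requires no surjectivity assumption on $L$.
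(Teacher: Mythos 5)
Your proof is correct and follows essentially the same route as the paper, which also reduces the corollary to Theorem~\ref{eldet} via the identity $D_\ee^k h(\xx)=k!\,h(\ee)\,e_{d-k}(\ell_1(\xx),\ldots,\ell_d(\xx))$ (cited there from Renegar) and the observation that a linear slice of a spectrahedral cone is spectrahedral. You merely spell out the details the paper leaves implicit: the normalization $\ell_j(\ee)=1$, the direct expansion of $h(\xx+s\ee)$, and the pullback of the pencil under $L$ together with positive definiteness at $\ee$.
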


\begin{proof}
Since 
$$
D_\ee^kh(\xx) = k!h(\ee)e_{d-k}(\ell_1(\xx), \ldots, \ell_d(\xx)),
$$
see e.g. \cite[Proposition 18]{Ren}, the corollary follows immediately from Theorem~\ref{eldet}. \qed
\end{proof}

\section{Proof of Theorem \ref{eldet}}
Let $G=(V,E)$ be a finite graph where $V$ is the set of vertices, $E$ is the set of edges, and each edge connects two distinct vertices. We allow for more than one edge connecting two distinct vertices. The graphs considered here are connected, i.e., between each pair of distinct vertices there is a path connecting  them. Assign variables $\xx=\{x_e\}_{e \in E}$ to the edges. Recall that a \emph{spanning tree}  is a maximal  (with respect to inclusion) subset $T$ of $E$ that contains no cycle, i.e., a minimal (with respect to inclusion)  set $T \subseteq E$ such that the graph $(V,T)$ is connected. The \emph{spanning tree polynomial} is defined as 
$$
T_G(\xx)= \sum_T \prod_{e \in T}x_e,
$$
where the sum is over all spanning trees in $G$, see Fig. \ref{Figu4}. 
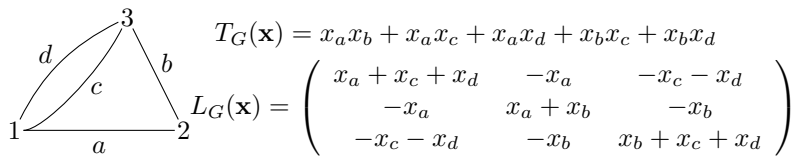
\begin{figure}[htp]
$$
\begin{tikzpicture}[scale=0.75, inner sep=0.7pt]
  \node (0) at (0,0) {$1$};
  \node (1) at (3,0) {$2$};
    \node (2) at (2,2) {$3$};
    \node (3) at (1.5,-.3) {$a$};
    \node (4) at (1.45,.75) {$c$};
     \node (5) at (2.7,1.2) {$b$};
      \node (6) at (0.55,1.35) {$d$};
       \node (7) at (8,1.7) {$T_G(\xx)=x_ax_b+x_ax_c+x_ax_d+x_bx_c+x_bx_d$};
        \node (8) at (8.5,0.4) {$
L_G(\xx)=  \left( \begin{array}{ccc}
x_a+x_c+x_d & -x_a & -x_c-x_d \\
-x_a & x_a+x_b & -x_b\\
-x_c-x_d & -x_b & x_b+x_c+x_d \end{array} \right)
$};
\draw (0) -- (1) -- (2);
\draw (0) .. controls (0.5,1) and (1,1.5) .. (2); 
\draw (0) .. controls (.5,0) and (1.5,1) .. (2); 
\end{tikzpicture}
$$ 
\caption{A graph $G$, its spanning tree polynomial and its weighted Laplacian. \label{Figu4}}
\end{figure}  
Suppose $V=[n]:=\{1,\ldots, n\}$ and let $\{\delta_i\}_{i=1}^n$ be the standard bases of $\RR^n$. The \emph{weighted Laplacian} of 
$G$ is defined as 
$$
L_G(\xx)= \sum_{e\in E}x_e(\delta_{e_1}-\delta_{e_2})(\delta_{e_1}-\delta_{e_2})^T,
$$
where $e_1$ and $e_2$ are the vertices incident to $e\in E$. In other words if $L_G(\xx)=(v_{ij}(\xx))_{i,j=1}^n$, then 
$
v_{ii}(\xx)= \sum_{e}x_e 
$,
where the sum is over all edges containing $i$, and if $i\neq j$, then 
$
 v_{ij}(\xx)= -\sum_{e} x_e
$,
where the sum is over all edges connecting $i$ and $j$, see Fig. \ref{Figu4} for an example. 
We refer to \cite[Theorem VI.29]{Tutte} for a proof of the next classical theorem that goes back to Kirchhoff and Maxwell.
\begin{theorem}[Matrix--tree theorem]\label{matrix-tree}
For $i \in V$, let $L_G(\xx)_{ii}$ be the matrix obtained by deleting the column and row indexed by $i$ in $L_G(\xx)$.  Then 
$$
T_G(\xx) = \det(L_G(\xx)_{ii}).
$$
\end{theorem}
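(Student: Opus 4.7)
The plan is to prove the matrix--tree theorem via the Cauchy--Binet formula applied to a factorization of the weighted Laplacian. The first step is to rewrite $L_G(\xx)$ as a product. Arbitrarily orient each edge $e \in E$, say from $e_1$ to $e_2$, and form the signed incidence matrix $B \in \RR^{V \times E}$ with $B_{u,e} = 1$ if $u = e_1$, $B_{u,e}=-1$ if $u = e_2$, and $B_{u,e} = 0$ otherwise. Let $D(\xx) = \diag(x_e)_{e \in E}$. Since $(\delta_{e_1} - \delta_{e_2})$ is exactly the column of $B$ indexed by $e$, the defining sum for $L_G(\xx)$ collapses to the matrix identity $L_G(\xx) = B D(\xx) B^T$. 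Deleting the row and column indexed by $i$ therefore gives $L_G(\xx)_{ii} = B_i D(\xx) B_i^T$, where $B_i$ is $B$ with the row for vertex $i$ removed.

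Next I would invoke the Cauchy--Binet formula on this $(n-1) \times E \times (n-1)$ product to obtain
\begin{equation*}
\det L_G(\xx)_{ii} = \sum_{\substack{S \subseteq E \\ |S| = n-1}} \det\!\bigl(B_i[S]\bigr)^2 \prod_{e \in S} x_e,
\end{equation*}
where $B_i[S]$ denotes the square submatrix of $B_i$ with columns indexed by $S$. In view of the definition of $T_G(\xx)$, the theorem will follow once I establish the combinatorial claim: for $|S| = n-1$,
\begin{equation*}
\det B_i[S] \in \{-1, 0, +1\}, \qquad \det B_i[S]^2 = 1 \iff (V,S) \text{ is a spanning tree of } G.
\end{equation*}

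To prove this claim I would argue by cases on the subgraph $(V, S)$. If $S$ contains a cycle $e_1, \ldots, e_k$, then signing the edges according to the orientation produces a $\pm 1$ linear combination of the columns of $B[S]$ (and hence of $B_i[S]$) that vanishes, so $\det B_i[S] = 0$. Otherwise $(V, S)$ is a forest on $n$ vertices with $n-1$ edges; if it is disconnected, at least one connected component $C$ avoids the vertex $i$, and then the submatrix of $B_i[S]$ formed by the rows of $C$ and the columns of the edges inside $C$ has each column summing to zero, forcing $\det B_i[S] = 0$. The remaining case is that $(V, S)$ is a spanning tree: choose a leaf $v \neq i$, expand the determinant along the row corresponding to $v$ (which contains exactly one nonzero entry, $\pm 1$, in the column of the unique edge of $S$ at $v$), and iterate. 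Each expansion peels off one leaf and multiplies the running determinant by $\pm 1$, so after $n-1$ steps we arrive at $\pm 1$.

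The conceptual heart, and the only real obstacle, is this leaf-peeling induction together with the cycle-dependence argument; everything else is a formal manipulation. Once the claim is in hand, substituting $\det B_i[S]^2 = \mathbf{1}[S \text{ is a spanning tree}]$ into the Cauchy--Binet expansion gives $\det L_G(\xx)_{ii} = \sum_{T} \prod_{e \in T} x_e = T_G(\xx)$, which is the stated identity and is independent of the choice of $i$ or of the initial orientation of the edges.
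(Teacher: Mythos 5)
Your proposal is correct and complete. The paper itself contains no proof of Theorem \ref{matrix-tree}: it simply refers to Tutte's book (Theorem VI.29), so there is no internal argument to compare against; your Cauchy--Binet proof is the standard modern one and it works verbatim in the setting used here (multigraphs without loops: a pair of parallel edges inside $S$ forms a $2$-cycle and is disposed of by your cycle-dependence argument). Two small remarks. First, since the $x_e$ are indeterminates and may be negative, a factorization through $D(\xx)^{1/2}$ is not available; but the version of Cauchy--Binet you actually invoke,
\begin{equation*}
\det\bigl(B_i D(\xx) B_i^T\bigr)=\sum_{\substack{S\subseteq E\\ |S|=n-1}}\det\bigl(B_i[S]\bigr)^2\prod_{e\in S}x_e,
\end{equation*}
is a polynomial identity in the $x_e$ (expand by multilinearity in the columns, or note both sides are polynomials agreeing for all nonnegative $x_e$), so this is fine. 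Second, your disconnected-forest case is vacuous: an acyclic subgraph on $n$ vertices with $n-1$ edges is automatically connected, so once $S$ is acyclic it is a spanning tree; including the case does no harm, and the argument you give for it is sound anyway, since the columns of the edges of a component avoiding $i$ are supported on the rows of that component and sum to zero there. The leaf-peeling induction closes correctly: as long as edges remain, the current tree has at least two leaves, so a leaf different from $i$ is always available, and each expansion step contributes a factor $\pm 1$.
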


Note that the matrix--tree theorem implies that the hyperbolicity cone of any connected graph $G$ is spectrahedral. Indeed $L_G(\xx)_{ii}$ is a pencil of positive semidefinite  matrices, and since $\det L_G(\one)_{ii}$ is equal to the number of spanning trees of $G$ we see that $L_G(\one)_{ii}$ is positive definite. 

\begin{remark}
Let $G=K_{n+1}$, the complete graph on $n+1$ vertices. Then $${L_G(\xx)}_{(n+1)(n+1)}=(v_{ij})_{i,j=1}^n,$$ where 
$v_{ij}= -x_{ij}$ if $i \neq j$ and $v_{ii}= x_{i(n+1)}-x_{ii}+\sum_{j=1}^{n}x_{ij}$.  Hence  the hyperbolicity cone of $T_G(\xx)$ is linearly isomorphic to the cone of positive semidefinite $n \times n$ matrices. Thus the generalized Lax conjecture is equivalent to that each hyperbolicity cone is a  slice of a hyperbolicity cone of some spanning tree polynomial. This is the reason for why we believed that at least for elementary symmetric polynomials one would be able to use the matrix--tree theorem to deduce that the hyperbolicity cones are spectrahedral. 
\end{remark}

The plan is to construct polynomials, $H_{k,k}(\xx)$, which are obtained from the  spanning tree polynomials of graphs $G_{k,k}$ by linear changes of variables. We will prove that $H_{k,k}(\xx)$ contains the elementary symmetric polynomial $e_{k+1}(\xx)$ as a factor, and  that the hyperbolicity cones of the other factors contain the hyperbolicity cone of $e_{k+1}(\xx)$. To do this we explicitly compute $H_{k,k}(\xx)$ (in Lemma \ref{Hkk}) and observe that all factors except $e_{k+1}(\xx)$ are directional derivatives of $e_k(\xx)$. We begin with a few technical definitions that are essential to the recursive construction used to compute $H_{k,k}(\xx)$.

Let $\{x_j\}_{j=1}^\infty$ be independent variables, and for a finite non-empty set $S \subset \ZZ_+ := \{1,2,\ldots\}$ let 
$$
e_k(S)= \mathop{\sum_{T \subseteq S}}_{|T| = k} \prod_{j \in T} x_j,
$$
be the $k$th elementary symmetric polynomial in $\{x_j\}_{j \in S}$. For $k \geq 1$ let 
$$
q_k(S)= \frac {e_k(S)}{e_{k-1}(S)}.
$$
From the recursions 
$$
ke_k(S)= \sum_{j \in S} x_j e_{k-1}(S\setminus \{j\}) \quad \mbox{ and } \quad e_k(S) = e_k(S\setminus\{j\})+ x_je_{k-1}(S\setminus\{j\}),
$$
we obtain 
\begin{align}\label{engine}
kq_k(S) &= \sum_{j \in S} \frac{ x_j e_{k-1}(S\setminus \{j\})} {e_{k-1}(S\setminus\{j\})+ x_je_{k-2}(S\setminus\{j\})}\nonumber\\
 &= \sum_{j \in S} \frac {x_j q_{k-1}(S\setminus\{ j\})}{x_j+q_{k-1}(S\setminus\{ j\})},
\end{align}
for all $k \geq 2$. 
It is no accident that  \eqref{engine} is reminiscent of the operation (C) on spanning tree polynomials below.
\begin{itemize}
\item[(A)]
If we replace an edge $e \in E$ between vertices $i$ and $j$ with $k$ parallel edges $e_1,\ldots, e_k$ between $i$ and $j$, then  the resulting polynomial is obtained by setting $x_e=x_{e_1}+\cdots + x_{e_k}$ in $T_G$. 
\item[(B)] If an edge $e$ between $i$ and $j$ is replaced by a path $i, e', k, e'', j$, then the resulting polynomial is 
obtained by multiplying by $x_{e'}+x_{e''}$ and setting 
$$
x_e=\frac{x_{e'}x_{e''}}{ x_{e'}+x_{e''}}
$$
in $T_G(\xx)$. 
\item[(C)] By (A) and (B), if we replace an edge $e$ between $i$ and $j$ by a series parallel graph as in Fig.~\ref{Figu} with edge-variables $x_1, y_1, \ldots, x_m, y_m$, then the resulting polynomial is obtained by multiplying by $\prod_{j=1}^m(x_j+y_j)$ and setting 
$$
x_e= \sum_{j=1}^m \frac {x_j y_j}{x_j+y_j},
$$
in $T_G(\xx)$. Indeed, replace $e$ by $m$ parallel edges (using (A)) and then split each new edge in two using (B). 
\end{itemize}
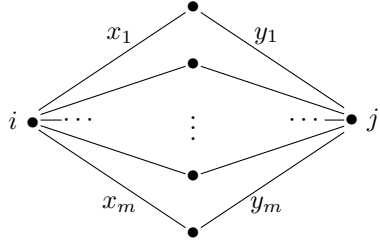
\begin{figure}[htp]
$$
\begin{tikzpicture}[scale=0.75, inner sep=0.2pt]
  \node (0) at (0,2) {$i \ \bullet$};
  \node (1) at (3,4) {$\bullet$};
    \node (9) at (1.7,3.5) {$x_1$};
    \node (19) at (1.7,0.5) {$x_m$};
    \node (10) at (4.3,3.5) {$y_1$};
    \node (11) at (4.3,0.5) {$y_m$};
  \node (2) at (3,3) {$\bullet$};
  \node (3) at (3,2) {$\vdots$};
   \node (4) at (3,1) {$\bullet$};
    \node (5) at (3,0) {$\bullet$};
     \node (6) at (6,2) {$\bullet \ j$};
     \node (7) at (1,2) {$\cdots$};  
       \node (8) at (5,2) {$\cdots$};
  \draw (0) -- (1) -- (6);
   \draw (0) -- (2) -- (6);
   \draw (0) -- (7);
   \draw (8) -- (6);
     \draw (0) -- (4) -- (6);
      \draw (0) -- (5) -- (6);
\end{tikzpicture}
$$ 
\caption{The graph in (C). \label{Figu}}
\end{figure}
If we set $y_j = q_1([m]\setminus \{j\})=e_1([m]\setminus \{j\})$ in Fig. \ref{Figu}, then by (C) and \eqref{engine} we obtain the polynomial 
$$
2q_2([m]) \prod_{j=1}^m (x_j + e_1([m]\setminus \{j\}))= 2 e_2(\xx)e_1(\xx)^{m-1}, 
$$ 
which by the matrix--tree theorem proves that $e_2(\xx)e_1(\xx)^{m-1}$ is a determinantal polynomial, and that the hyperbolicity cone of $e_2(\xx)$  is spectrahedral. We now  extend this construction to higher degrees.

We recursively construct a family of graphs, $\{G_{n,k}\}$, for integers $n \geq k \geq0$, using (C). 
Let $G_{n,0} = s   \bullet \! \! - \! \! \bullet  z$ for all $n \in \ZZ_+$, i.e., the graph with two vertices $s$ and $z$ connected by an edge. For $k>0$, the graph $G_{n,k}$ is constructed from $G_{n,k-1}$ by replacing each edge in $G_{n,k-1}$ that contains $z$ by a graph as in Fig. \ref{Figu}, with $m= n-k$. 
We will need a more explicit description of $G_{n,k}$. For $n\geq k \geq 1$, $G_{n,k}$ is the graph with vertices consisting of two designated vertices $s$ and $z$, and all words $w= w_1w_2\cdots w_\ell$ such that $1 \leq \ell \leq k$, $w_i \in [n]$ for all $i$, and $w_i \neq w_j$ for all $1 \leq i < j \leq \ell$.  The edges in $G_{n,k}$ are between the vertices:
\begin{enumerate}
\item $s$ and $i$ for all $1\leq i \leq n$;
\item  $w_1\cdots w_{i-1}$ and $w_1 \cdots w_{i-1} w_{i}$ for all $2 \leq i \leq k$;
\item  $w_1\cdots w_k$ and $z$,
\end{enumerate} 
see Fig. \ref{Figu2}. 
\begin{figure}[htp]
$$
\begin{tikzpicture}[scale=0.5, inner sep=1pt]
  \node (0) at (0,4) {$s$};
  \node (1) at (3,1) {$1$};
    \node (2) at (3,4) {$2$};
    \node (3) at (3,7) {$3$};
     \node (123) at (6,4) {$z$};
  \draw (0) -- (1) --  (123) -- (2) -- (0) -- (3) -- (123) ;
\end{tikzpicture} \ \ \ \ \ \ \ \ \ 
\begin{tikzpicture}[scale=0.4, inner sep=1pt]
  \node (0) at (0,4) {$s$};
  \node (1) at (3,1) {$1$};
    \node (2) at (3,4) {$2$};
    \node (3) at (3,7) {$3$};
    \node (12) at (8,0) {$12$};
    \node (13) at (8,2) {$13$};
  \node (21) at (8,3) {$21$};
  \node (23) at (8,5) {$23$};
   \node (31) at (8,6) {$31$};
    \node (32) at (8,8) {$32$};
     \node (123) at (14,4) {$z$};
  \draw (0) -- (1) -- (12) -- (123) -- (32) -- (3) -- (0) -- (2) -- (21) -- (123);
  \draw (1) -- (13) -- (123);
  \draw (2) -- (23) -- (123);
  \draw (3) -- (31) -- (123);
\end{tikzpicture}
$$ 
\caption{The graphs $G_{3,1}$ and  $G_{3,2}$. \label{Figu2}}
\end{figure}
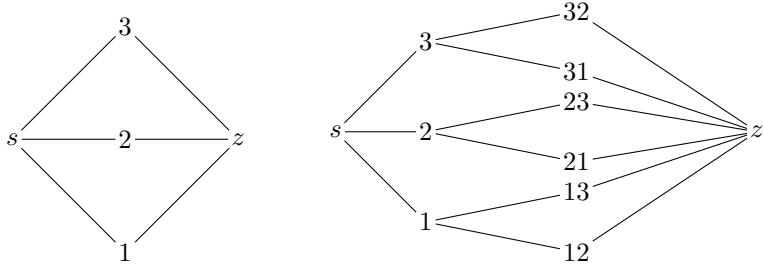

Let $n$ be fixed. For all $r \geq k$ define a rational function $H_{k,r}(\xx)$, by setting the edge-variables in $T_{G_{n,k}}$  as: 
\begin{itemize}
\item[(a)] $r! x_i$ if as in 1;
\item[(b)] $(r-i+1)!x_{w_i}$ if as in 2; 
\item[(c)] $(r-k+1)! q_{r-k+1}(\{w_1,\ldots, w_k\}')$, where $S' =[n]\setminus S$, if as in 3.
\end{itemize}
 Note that if $k=0$ in (c), then the edge variable is $$(r+1)! q_{r+1}([n])=(r+1)!\frac{e_{r+1}(\xx)}{e_r(\xx)}.$$ We are only interested in $H_{k,k}(\xx)$, but to get the recursion running smoothly we need the extra parameter $r$.
\begin{example}
Let us compute $H_{2,2}(\xx)$ for $n=4$. For each $j=1,2,3,4$, replace the pieces between $j$ and $z$ in $G_{4,2}$ by a new edge using (C) and \eqref{engine}, see Fig.~\ref{Figu3}. The new edge variable for the edge $\{4,z\}$ is
$$
\frac{x_3(x_1+x_2)}{x_3+x_1+x_2}+\frac{x_2(x_1+x_3)}{x_2+x_1+x_3}+ \frac{x_1(x_2+x_3)}{x_1+x_2+x_3} = 2q_2([4]\setminus \{4\}), 
$$   
and thus $2q_2([4]\setminus \{j\})$ for the edge $\{j,z\}$. We are left with a graph as in Fig.~\ref{Figu} with edge weights $2x_j$ and $2q_2([4]\setminus \{j\})$, respectively. If we replace this graph by a new edge using (C) and \eqref{engine} we arrive at the graph $s   \bullet \! \! - \! \! \bullet  z$ with edge weight $6q_3([4])$. By multiplying with the factors for each time  (C) was used we arrive at
\begin{align*}
H_{2,2}(\xx)&= 6q_3([4]) \prod_{j=1}^4 \left( 2x_j + 2 \frac {e_2([4]\setminus \{j\})}{e_1([4]\setminus \{j\})} \right) \prod_{j=1}^4 e_1([4]\setminus \{j\})^3\\
&= 96e_3([4])e_2([4])^3 \prod_{j=1}^4 e_1([4]\setminus \{j\})^2.
\end{align*}

\begin{figure}[htp]
$$
\begin{tikzpicture}[scale=0.5, inner sep=1pt]
  \node (0) at (0,4) {$4$};
  \node (1) at (3,1) {$41$};
    \node (2) at (3,4) {$42$};
    \node (3) at (3,7) {$43$};
     \node (123) at (6,4) {$z$};
  \node (3z)   at (5.4,6) {$x_1+x_2$};
   \node (1z)   at (5.4,2) {$x_2+x_3$};
   \node (41)   at (1.2,2) {$x_1$}; 
   \node (43)   at (1.2,6) {$x_3$}; 
   \node (42)   at (1.4,4.3) {$x_2$}; 
    \node (s4)   at (-1.3,4.1) {$2x_4$}; 
    \node (2z)   at (4.4,4.4) {$x_1+x_3$}; 
  \draw (-2.1,3.5) -- (0) -- (1) --  (123) -- (2) -- (0) -- (3) -- (123) ;
\end{tikzpicture}
$$ 
\caption{A piece of $G_{4,2}$ with $k=r=2$. \label{Figu3}}
\end{figure}
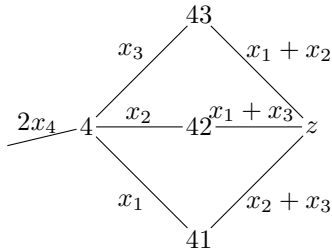
\end{example}
 Let 
$$
\gamma_{kr} = \prod_{S \in \binom {[n]}{n-k}}e_{r-k}(S)^{k!} = \prod_{S \in \binom {[n]}{k}}e_{r-k}(S')^{k!} , 
$$
where $0 \leq k \leq r \leq n$ and $\binom {[n]}{j} = \{ U \subseteq [n] : |U|=j\}$. 
\begin{lemma}\label{step}
Let $1\leq k \leq r \leq n-1$ be integers. Then there are positive constants $C_{i,r}$, $0 \leq i \leq r$, such that 
$$
H_{0,r}(\xx) = C_{0,r} \frac{e_{r+1}(\xx)}{e_r(\xx)}, 
$$
and  
$$
H_{k,r}(\xx) = C_{k,r}H_{k-1,r}(\xx) \frac { (\gamma_{(k-1)r})^{n-k+1} }{\gamma_{kr}}, 
$$
for $k > 0$. 
\end{lemma}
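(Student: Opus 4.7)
I would prove the lemma by induction on $k$. The base case $k=0$ is immediate: $G_{n,0}$ has just the single edge $s$--$z$, so $T_{G_{n,0}}(\xx)$ equals its edge variable, which by rule (c) (applied with the empty word) is $(r+1)!\,q_{r+1}([n]) = (r+1)!\,e_{r+1}(\xx)/e_r(\xx)$, giving $C_{0,r} = (r+1)!$.

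For the inductive step, the plan is to exploit directly the fact that $G_{n,k}$ is obtained from $G_{n,k-1}$ by applying operation (C) simultaneously to each edge incident to $z$, so that $T_{G_{n,k}}$ relates to $T_{G_{n,k-1}}$ through a controlled substitution and multiplicative correction. Fix such an edge $e_w = \{w,z\}$ with $w = w_1\cdots w_{k-1}$ and set $S_w = [n]\setminus\{w_1,\ldots,w_{k-1}\}$. For each $j\in S_w$, the replacing series--parallel graph adds edges $\{w,wj\}$ and $\{wj,z\}$, whose variables in $H_{k,r}$ are $u_j = (r-k+1)!\,x_j$ and $v_j = (r-k+1)!\,q_{r-k+1}(S_w\setminus\{j\})$ by rules (b) (with $i=k$) and (c). Pulling the common factor $(r-k+1)!$ out of $\sum_j u_j v_j/(u_j+v_j)$ and applying \eqref{engine} gives the value $(r-k+2)!\,q_{r-k+2}(S_w)$, which is exactly the substitution that rule (c) assigns to $x_{e_w}$ in $H_{k-1,r}$. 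Since the substitutions on edges of $G_{n,k-1}$ not incident to $z$ agree verbatim in $H_{k-1,r}$ and $H_{k,r}$, I conclude
\begin{equation*}
H_{k,r}(\xx) = H_{k-1,r}(\xx)\cdot\prod_{w}\prod_{j\in S_w}(r-k+1)!\,(x_j + q_{r-k+1}(S_w\setminus\{j\})).
\end{equation*}

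The remaining task is to massage the correction factor into the form $C_{k,r}\,\gamma_{(k-1)r}^{n-k+1}/\gamma_{kr}$. From $e_{r-k+1}(S) = e_{r-k+1}(S\setminus\{j\}) + x_j\,e_{r-k}(S\setminus\{j\})$ one obtains $x_j + q_{r-k+1}(S\setminus\{j\}) = e_{r-k+1}(S)/e_{r-k}(S\setminus\{j\})$. The constant $((r-k+1)!)^{n!/(n-k)!}$, whose exponent counts the $(w,j)$ pairs, collects the prefactors. For the remaining rational function: each $(k-1)$-subset $U\subseteq[n]$ underlies $(k-1)!$ words $w$, all with $S_w = U'$, so $\prod_w e_{r-k+1}(S_w)^{|S_w|} = \prod_{U}e_{r-k+1}(U')^{(n-k+1)(k-1)!} = \gamma_{(k-1)r}^{n-k+1}$. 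And the map $(U,j)\mapsto (U\cup\{j\},j)$ is a bijection onto pairs $(V,j)$ with $V\in\binom{[n]}{k}$ and $j\in V$ (each $V$ being hit $k$ times, with $V' = U'\setminus\{j\}$), yielding $\prod_V e_{r-k}(V')^{k(k-1)!} = \gamma_{kr}$ in the denominator. This completes the computation with $C_{k,r} = ((r-k+1)!)^{n!/(n-k)!}$, which is positive.

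The main obstacle I anticipate is the combinatorial bookkeeping in this last step: translating the double product over ordered words $w$ and their incident labels $j$ into products over unordered subsets with the correct exponents, so that the $\gamma_{jr}$'s emerge exactly with the multiplicities built into their definition. The operation-(C) reduction and the identity \eqref{engine} are essentially preordained by the design of the substitution rules (a)--(c); it is the $\gamma_{kr}$ bookkeeping that carries the real content.
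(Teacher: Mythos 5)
Your proposal is correct and follows essentially the same route as the paper: contract the series--parallel pieces at $z$ via operation (C) and the recursion \eqref{engine}, observe that the contracted edge variable is exactly the rule-(c) substitution for $G_{n,k-1}$, rewrite each factor as $e_{r-k+1}(S_w)/e_{r-k}(S_w\setminus\{j\})$, and count multiplicities over words to obtain $\gamma_{(k-1)r}^{\,n-k+1}/\gamma_{kr}$. The only (harmless) difference is that you track the constant explicitly, getting $C_{k,r}=((r-k+1)!)^{n!/(n-k)!}$, where the paper works modulo constants.
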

\begin{proof}
The first statement follows immediately from the definitions. For $k>0$ consider the graph  $G_{n,k}$ with edge variables as described in (a), (b) and (c) above. Now consider a right-most piece of this graph  as depicted in Fig. \ref{Figu5}. 
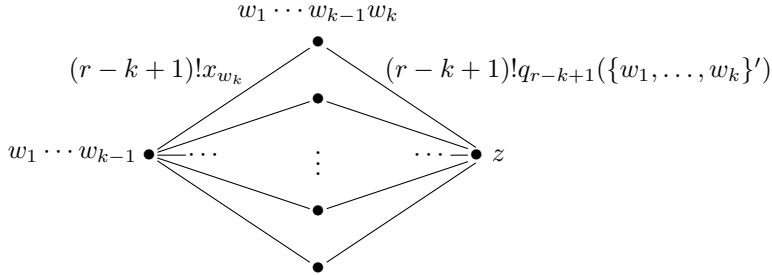
\begin{figure}[htp]
$$
\begin{tikzpicture}[scale=0.75, inner sep=.5pt]
\node (l) at (-1.2,2) {  $w_1\cdots w_{k-1}$ \ \ };
  \node (0) at (0,2) {  $\bullet$};
  \node (1) at (3,4) {$\bullet$};
  \node (k) at (3,4.5) {$w_1\cdots w_{k-1}w_k$};
    \node (9) at (.3,3.5) {$(r-k+1)!x_{w_k}$ \ \ };
    \node (19) at (1.7,0.5) {};
    \node (10) at (7.3,3.5) { \ \ \ \ $(r-k+1)!q_{r-k+1}(\{w_1,\ldots,w_k\}')$};
    \node (11) at (4.3,0.5) {};
  \node (2) at (3,3) {$\bullet$};
  \node (3) at (3,2) {$\vdots$};
   \node (4) at (3,1) {$\bullet$};
    \node (5) at (3,0) {$\bullet$};
     \node (6) at (6,2) {$\bullet \ z$};
     \node (7) at (1,2) {$\cdots$};  
       \node (8) at (5,2) {$\cdots$};
  \draw (0) -- (1) -- (6);
   \draw (0) -- (2) -- (6);
   \draw (0) -- (7);
   \draw (8) -- (6);
     \draw (0) -- (4) -- (6);
      \draw (0) -- (5) -- (6);
\end{tikzpicture}
$$ 
\caption{A right-most piece of  $G_{n,k}$. \label{Figu5}}
\end{figure}
If we replace this piece by an edge using (C) and \eqref{engine}, then the new edge variable is $(r-(k-1)+1)!q_{r-(k-1)+1}(\{w_1,\ldots, w_{k-1}\}')$ which is the correct edge variable as in (c) for $G_{n,k-1}$. If we do this for each rightmost subgraph such as in Fig. \ref{Figu5} we thus get 
$$
H_{k,r}(\xx) = Q_{k,r}(\xx)H_{k-1,r}(\xx), 
$$
where $Q_{k,r}(\xx)$ is the product of all the factors that come from the operations as in (C). Modulo constants, for each word $w_1\cdots w_k$ we get a factor 
$$
x_{w_k}+ q_{r-k+1}(\{w_1,\ldots, w_k\}')= \frac { e_{r-k+1}(\{w_1,\ldots, w_{k-1}\}')} {e_{r-k}(\{w_1,\ldots, w_k\}')}.
$$
For each $S \in \binom {[n]} {k-1}$, the numerator $e_{r-k+1}(S')$ will appear exactly $(k-1)!(n-k+1)$ times in the product, since there are $(k-1)!$ ways to linearly order $S$ to obtain a word $w_1\cdots w_{k-1}$, and then $n-k+1$ choices for $w_k$.  This accounts for the term $(\gamma_{(k-1)r})^{n-k+1}$. Similarly, for each $S \in \binom {[n]} {k}$ the denominator $e_{r-k}(S')$ will appear exactly $k!$ times, since there are $k!$ ways to linearly order $S$ to obtain a word $w_1\cdots w_{k}$. This accounts for the term $\gamma_{kr}$ in the denominator.\qed
\end{proof}

\begin{lemma}\label{Hkk}
Let $1 \leq k \leq n-1$. Then 
\begin{equation}\label{stor}
H_{k,k}(\xx)= C_ke_{k+1}(\xx)\mathop{\prod_{S \subseteq [n]}}_{|S| \leq k-1}  (\partial^Se_{k}(\xx))^{|S|!(n-|S|-1)},
\end{equation}
where $C_k$ is a positive constant and $\partial^S = \prod_{j \in S}\partial / \partial x_j$.
\end{lemma}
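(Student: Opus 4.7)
The plan is to iterate Lemma~\ref{step} and then carry out a bookkeeping exponent count, based on the identity
$$
\partial^S e_k(\xx) = e_{k-|S|}(S'), \qquad |S| \leq k, \quad S' := [n]\setminus S,
$$
which follows at once from iterating $\partial_i e_k(\xx) = e_{k-1}([n]\setminus\{i\})$.

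First I would hold $r$ fixed and telescope the recursion of Lemma~\ref{step}, iterating from $j=k$ down to $j=1$ and inserting the base case $H_{0,r}(\xx) = C_{0,r} e_{r+1}(\xx)/e_r(\xx)$, to obtain
$$
H_{k,r}(\xx) = C \cdot \frac{e_{r+1}(\xx)}{e_r(\xx)} \prod_{j=1}^{k}\frac{\gamma_{j-1,r}(\xx)^{n-j+1}}{\gamma_{j,r}(\xx)}
$$
for a positive constant $C$. Specializing $r=k$ and rewriting the alternative form $\gamma_{j,k} = \prod_{S \in \binom{[n]}{j}} e_{k-j}(S')^{j!}$ via the identity above gives
$$
\gamma_{j,k}(\xx) = \prod_{S \in \binom{[n]}{j}} \bigl(\partial^S e_k(\xx)\bigr)^{j!}.
$$

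The remaining work is to determine the exponent of each factor $\partial^S e_k(\xx)$ in the resulting rational function. For $S$ with $|S|=s$, this factor appears only in $\gamma_{s,k}$, so I would track a numerator contribution $s!(n-s)$ from $\gamma_{j-1,k}^{n-j+1}$ at $j=s+1$ (available when $s \leq k-1$) and a denominator contribution $s!$ from $\gamma_{j,k}$ at $j=s$ (available when $s \geq 1$). For $1 \leq s \leq k-1$ the net exponent is $s!(n-s)-s! = s!(n-s-1)$, matching the formula. For $s=0$ the exponent is $n$, which after absorbing the $e_k(\xx)^{-1}$ from $e_{r+1}(\xx)/e_r(\xx)$ becomes $n-1 = 0!(n-0-1)$, again matching the formula with $S=\emptyset$. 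For $s=k$ the exponent is $-k!$, but $\partial^S e_k(\xx) = e_0(S')=1$, so the factor is trivial and the product can be restricted to $|S| \leq k-1$. The main obstacle is simply avoiding arithmetic slips in this bookkeeping, especially across the two boundary cases $|S|=0$ and $|S|=k$, where one must remember to combine with the leading $e_k^{-1}$ factor or verify triviality, respectively.
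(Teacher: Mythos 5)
Your proposal is correct and follows essentially the same route as the paper's proof: telescope the recursion of Lemma~\ref{step} down to the base case $H_{0,r}=C_{0,r}e_{r+1}/e_r$, set $r=k$, and convert the $\gamma_{jk}$ factors via $\partial^S e_k(\xx)=e_{k-|S|}(S')$. Your exponent bookkeeping (net exponent $s!(n-s)-s!=s!(n-s-1)$ for $1\le s\le k-1$, the $s=0$ case absorbing $e_k^{-1}$, and $\gamma_{kk}=1$ being trivial) is just a more explicit rendering of the paper's cancellation $e_{r+1}\prod_{j=0}^{r-1}\gamma_{jr}^{n-j-1}$.
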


\begin{proof}
By iterating Lemma \ref{step}, modulo constants, 
\begin{align*}
H_{r,r}(\xx) &=  \frac {(\gamma_{(r-1)r})^{n-r+1}}{\gamma_{rr}} \frac {(\gamma_{(r-2)r})^{n-r+2}}{\gamma_{(r-1)r}} \cdots \frac {(\gamma_{1r})^{n-1}}{\gamma_{2r}}\frac {(\gamma_{0r})^{n}}{\gamma_{1r}}H_{0,r}(\xx) \\
&=  e_{r+1}(\xx) \prod_{j=0}^{r-1} \gamma_{jr}^{n-j-1},  
\end{align*}
where we have used $\gamma_{0r}= e_r(\xx)$, $\gamma_{rr}=1$ and $H_{0,r}(\xx) = C_{0,r} {e_{r+1}(\xx)}/{e_r(\xx)}$. 
The theorem follows by noting that $\partial^S e_k(\xx)= e_{k-|S|}(S')$. \qed
\end{proof}

To finish the proof Theorem \ref{eldet} we need further properties of hyperbolic polynomials. The next lemma is fundamental (and known) but we could not find a proof in the literature. 
\begin{lemma}\label{prop}
Let $U \subseteq \RR^n$ be an open and  connected  set, and let $\PP_{n,d}(U)$ be the space of all hyperbolic polynomials of degree $d$ in $\RR[x_1, \ldots, x_n]$ with hyperbolicity cone containing $U$, i.e., polynomials that are  hyperbolic with respect to each $\ee \in U$. Then $\PP_{n,d}(U) \cup \{0\}$ is closed (under point-wise convergence). 
\end{lemma}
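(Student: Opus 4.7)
The plan is to use Hurwitz's theorem in several complex variables applied to the tube domain $\Omega = \RR^n + iU \subseteq \CC^n$. First, pointwise convergence of a sequence of polynomials of bounded degree is equivalent to convergence of coefficients (evaluate at a generic set of $\binom{n+d}{d}$ points in $\RR^n$, where the evaluation map on polynomials of degree at most $d$ is invertible), so the pointwise limit $h$ is automatically a polynomial of degree at most $d$. The identity $h_k(\lambda \xx)= \lambda^d h_k(\xx)$ passes to the limit, and hence $h$ is either homogeneous of degree $d$ or the zero polynomial.

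The key observation is that hyperbolicity of $h_k$ with respect to $\ee \in U$ is equivalent to $h_k(\xx + z\ee) \neq 0$ for every $\xx \in \RR^n$ and every $z \in \CC \setminus \RR$. Setting $z = i$ shows that $h_k$ is non-vanishing on $\Omega$, which is open and connected, being the image of the product $\RR^n \times U$ under $(\xx, \ee) \mapsto \xx + i\ee$. Since pointwise convergence of polynomials of bounded degree implies coefficient-wise convergence, one gets uniform convergence on compact subsets of $\CC^n$. By Hurwitz's theorem in several complex variables (which reduces to the one-variable version by restricting to complex lines along which $h$ is not identically zero), either $h$ vanishes identically on $\Omega$, in which case $h$ is the zero polynomial, or $h$ is nowhere zero on $\Omega$.

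In the latter case, for each $\ee \in U$ one has $h(i\ee) \neq 0$, and homogeneity gives $h(\ee) = i^{-d} h(i\ee) \neq 0$. To conclude hyperbolicity of $h$ with respect to $\ee$, fix $\xx \in \RR^n$ and consider the univariate polynomials $t \mapsto h_k(\xx + t\ee)$: each has degree exactly $d$, has only real roots, and has leading coefficient $h_k(\ee) \to h(\ee) \neq 0$. Continuity of the roots of a polynomial in its coefficients (equivalently, a further one-variable Hurwitz application) then ensures that $h(\xx + t\ee)$ has degree $d$ with only real roots, so $h \in \PP_{n,d}(U)$. The only slightly non-routine ingredient is the several-variable Hurwitz theorem, but it reduces cleanly to the one-variable version via restrictions to generic complex lines; all other steps are standard continuity arguments.
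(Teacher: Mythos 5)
Your proposal is correct and follows essentially the same route as the paper: characterize hyperbolicity with respect to all $\ee \in U$ by non-vanishing on a tube domain and then apply Hurwitz' theorem to the locally uniform limit. The only cosmetic differences are that you work with the tube $\RR^n+iU$ instead of $U+i\RR^n$ (equivalent via homogeneity) and you finish by univariate continuity of roots together with $h(\ee)\neq 0$ rather than by the converse direction of the tube characterization, while also spelling out explicitly that the pointwise limit is homogeneous of degree $d$ or identically zero.
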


\begin{proof}
We claim that a homogeneous polynomial $h \in \RR[x_1, \ldots, x_n]$ of degree $d$ belongs to $\PP_{n,d}(U)$ if and only if $h(\zz)\neq 0$ for all $\zz$ in the tube 
$U+i\RR^n := \{\xx+i\yy : \xx \in U \mbox{ and } \yy \in \RR^n\}$. Indeed, if $h$ is hyperbolic with respect to each $\ee \in U$, then 
$h(\ee+i\yy)=(-i)^dh(-\yy+i\ee) \neq 0$ for all $\yy \in \RR^n$ by the definition of hyperbolicity. Conversely,  if $h$ fails to be hyperbolic for some $\ee \in U$, then $h(\xx+(a+ib)\ee)=0$ for some $\xx \in \RR^n$ and $a, b \in \RR$ with $b \neq 0$. Thus, by homogeneity, $h(\ee+i(-b^{-1}\xx-ab^{-1}\ee)=0$, so that 
$h$ fails to be non-vanishing on $U+i\RR^n$. 

If $\{h_k\}_{k=1}^\infty$ is a sequence of polynomials in $\PP_{n,d}(U)$ which converges point-wise to $h$, then the convergence is also uniform on compact subsets of $\CC^n$ (by the equivalence of norms in finite dimensions). 
It now follows from Hurwitz' theorem on the continuity of zeros (see \cite[p. 96]{COSW} for a multivariate version) that either $h \in \PP_{n,d}(U)$, or 
$h \equiv 0$. 
\end{proof}

\begin{lemma}\label{partial}
Suppose $h$ is hyperbolic with respect to $\ee$, and $\vv \in \Lambda_+(h,\ee)$ is such that $D_\vv h \not \equiv 0$. Then $D_\vv h$ is hyperbolic with respect to $\ee$, and $\Lambda_+(h,\ee)\subseteq \Lambda_+(D_\vv h,\ee)$. 
\end{lemma}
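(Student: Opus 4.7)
The plan is to reduce to the case where $\vv$ lies in the interior of $\Lambda_+(h,\ee)$ — which is classical — and then use Lemma~\ref{prop} to pass to the boundary.

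First I would treat the interior case. If $\vv \in \Int \Lambda_+(h,\ee)$, then $h$ is hyperbolic with respect to $\vv$ and $\Lambda_+(h,\vv) = \Lambda_+(h,\ee)$, so by the derivative result recalled in the introduction (applied with hyperbolicity direction $\vv$ in place of $\ee$) the polynomial $D_\vv h$ is hyperbolic with respect to $\vv$ and
\[
\Lambda_+(D_\vv h,\vv) \supseteq \Lambda_+(h,\vv) = \Lambda_+(h,\ee).
\]
Since $\ee \in \Int \Lambda_+(h,\ee) \subseteq \Lambda_+(D_\vv h,\vv)$ and $\Int \Lambda_+(h,\ee)$ is open, $\ee$ lies in the interior of $\Lambda_+(D_\vv h,\vv)$. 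Consequently $D_\vv h$ is hyperbolic with respect to $\ee$ with $\Lambda_+(D_\vv h,\ee) = \Lambda_+(D_\vv h,\vv) \supseteq \Lambda_+(h,\ee)$.

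For general $\vv \in \Lambda_+(h,\ee)$ with $D_\vv h \not\equiv 0$, I would approximate by $\vv_k := \vv + k^{-1}\ee$, which lies in $\Int \Lambda_+(h,\ee)$ (being the sum of an interior point and a point of the closed convex cone). By the interior case each $D_{\vv_k} h$ is hyperbolic with respect to every $\ee' \in U := \Int \Lambda_+(h,\ee)$, so $D_{\vv_k} h \in \PP_{n,d-1}(U)$, where $U$ is open, convex, and thus connected. The sequence $D_{\vv_k} h = D_\vv h + k^{-1} D_\ee h$ converges pointwise to the nonzero polynomial $D_\vv h$, so Lemma~\ref{prop} yields $D_\vv h \in \PP_{n,d-1}(U)$; in particular $D_\vv h$ is hyperbolic with respect to $\ee$.

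To finish I need $\Lambda_+(D_\vv h,\ee) \supseteq \Lambda_+(h,\ee) = \overline{U}$, which I would obtain by a clopen argument in $U$. Let $U' := \{\ee' \in U : \ee' \in \Int \Lambda_+(D_\vv h,\ee)\}$. Then $\ee \in U'$, and $U'$ is clearly open in $U$. For closedness, suppose $U \ni \ee_k \to \ee' \in U$ with $\ee_k \in U'$; then $\ee' \in \overline{\Int \Lambda_+(D_\vv h,\ee)} = \Lambda_+(D_\vv h,\ee)$, and since $D_\vv h$ is hyperbolic with respect to $\ee'$ we have $D_\vv h(\ee') \neq 0$, so $\ee' \in \Int \Lambda_+(D_\vv h,\ee)$. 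Connectedness of $U$ forces $U' = U$, giving $U \subseteq \Lambda_+(D_\vv h,\ee)$ and the desired inclusion on taking closures. The main obstacle I anticipate is precisely this last step: merely knowing that $D_\vv h$ is hyperbolic with respect to every $\ee' \in U$ does not immediately say the associated hyperbolicity cones all coincide with $\Lambda_+(D_\vv h,\ee)$, and it is Lemma~\ref{prop} together with connectedness of $U$ that rescues this.
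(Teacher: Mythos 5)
Your proposal is correct and takes essentially the same route as the paper: the interior case via the classical derivative fact, then approximation of a boundary point $\vv$ by interior points and an appeal to Lemma~\ref{prop} applied to $D_{\vv_k}h \in \PP_{n,d-1}(U)$. Your concluding clopen argument just makes explicit the equivalence (hyperbolic with respect to every point of the open connected set $U$ if and only if the common hyperbolicity cone contains $U$) that the paper builds into its definition of $\PP_{n,d}(U)$, so it is sound, merely a bit more elaborate than the paper's one-line conclusion.
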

\begin{proof}
Let $\vv \in \Lambda_{+}(h,\ee)$ and let $U$ be the interior of $\Lambda_{+}(h,\ee)$. 
If $\vv \in U$, then the conclusion is known to follow, see e.g. \cite[Theorem 4]{Ga}. Otherwise, if $\vv$ is on the boundary of  $\Lambda_{+}(h,\ee)$ take a sequence $\{\vv_k\}_{k=0}^\infty \subset U$  such that $\lim_{k \to \infty} \vv_k = \vv$. Then $D_{\vv_k} h \in \PP_{n,d-1}(U)$ for all $k$, by the above. 
The lemma now follows from Lemma~\ref{prop}. 
\qed
 \end{proof}
\emph{Proof of Theorem \ref{eldet}} 
By the matrix--tree theorem we may write the spanning tree polynomial of $G_{n,k}$ as $\det\!\left(\sum_{e \in E}x_eA_e\right)$, where $\{A_e\}_{e \in E}$ are positive semidefinite, and $A=\sum_{e \in E} A_e$ is positive definite (since $\det(A)$ is positive and equal to the number of spanning trees of the connected graph $G_{n,k}$). 
Note that when $k=r$, the edge-variables in (c) in the construction of $H_{k,k}(\xx)$ are given by 
$$
 \sum_{j  \in [n]\setminus \{w_1, \ldots, w_k\}} \!  \!  \!  \!  \!  x_j. 
$$ 
Hence we may write $H_{k,k}(\xx)=\det\!\left(\sum_{j=1}^nx_j B_j\right)$, where $B_1,\ldots, B_n$ are positive semidefinite. Note also that  
$
\det\!\left(\sum_{j=1}^n B_j\right)= H_{k,k}(\one) \neq 0, 
$
by Lemma~\ref{Hkk}. Hence $\sum_{j=1}^n B_j$ is positive definite, and it 
remains to prove $\Lambda_{+}(e_{k+1}) \subseteq \Lambda_{+}(\partial^Se_{k})$. Now 
$e_k(\xx) =  (n-k)^{-1}D_\one e_{k+1}(\xx)$, so that $\Lambda_+(e_{k+1}) \subseteq \Lambda_+(e_{k})$ by Lemma \ref{partial}. Since the coordinate directions are in $\Lambda_+(e_k)$, we have $\Lambda_+(e_k) \subseteq \Lambda_{+}(\partial^Se_{k})$ by Lemma~\ref{partial}. \qed

\end{document}